\newtheorem{theorem}{Theorem}[section]
\newtheorem{corollary}[theorem]{Corollary}
\newtheorem{lemma}[theorem]{Lemma}
\newtheorem{proposition}[theorem]{Proposition}
\theoremstyle{definition}
\theoremstyle{remark}
\newtheorem{remark}[theorem]{Remark}
\def\sign{\operatorname{sgn}}
\begin{document}

\title[Sharpening the $L^p$ triangle inequality]{Sharpening the triangle inequality: envelopes between $L^{2}$ and $L^{p}$ spaces}

\author{Paata Ivanisvili}
\address{Department of Mathematics, UC Irvine}
\email{\tt pivanisv@uci.edu}

\author{Connor Mooney}
\address{Department of Mathematics, UC Irvine}
\email{\tt mooneycr@math.uci.edu}

\subjclass[2010]{42B20, 42B35, 47A30}
\keywords{}

%-----------------------------------------------

\begin{abstract} 
Motivated by the inequality $\|f+g\|_{2}^{2} \leq  \|f\|_{2}^{2}+2\|fg\|_{1}+\|g\|^{2}_{2}$, Carbery (2006) raised the question what is the ``right" analogue of this estimate in $L^{p}$ for $p \neq 2$. 
Carlen, Frank, Ivanisvili and Lieb (2018) recently obtained an $L^{p}$ version of this inequality by providing upper bounds for $\|f+g\|_{p}^{p}$ in terms of the quantities $\|f\|_{p}^{p}, \|g\|_{p}^{p}$ and $\|fg\|_{p/2}^{p/2}$ when $p \in(0,1] \cup [2,\infty)$, and lower bounds when $p \in (-\infty,0) \cup (1,2)$, thereby proving (and improving) the suggested possible inequalities of Carbery. We continue investigation in this direction by refining the estimates of Carlen, Frank, Ivanisvili and Lieb. We obtain upper bounds for $\|f + g\|_p^p$ also when $p \in (-\infty,0) \cup (1,2)$ and lower bounds when $p \in (0,1] \cup [2,\infty)$. For $p \in [1,2]$ we extend our upper bounds to any finite number of functions. In addition, we show that all our upper and lower bounds of $\|f+g\|_{p}^{p}$ for $p \in \mathbb{R}$, $p\neq 0$, are the best possible in terms of the quantities $\|f\|_{p}^{p}, \|g\|_{p}^{p}$ and $\|fg\|_{p/2}^{p/2}$,
and we characterize the equality cases.  
\end{abstract}

%------------------------------------------------

\maketitle 

%%%%%%%%%%%%%%%%%%%%%%%%%%%%%%%%%%%%%%%%%%%%%%%%%%%%%%%%%%%%%%%%%%%%%%%%%%%%%%%%%%%%%%%%%%%%%%%%%%%%%%%%%%%%
\section{Introduction}
For any real-valued functions $f,g \in L^{p}$ on an arbitrary measure space, and any $p\geq 1$, one has the inequality 
\begin{align}\label{conv1}
\| f+g\|_{p}^{p} \leq 2^{p-1} \left( \|f\|^{p}_{p} + \|g\|^{p}_{p} \right).
\end{align}
The estimate (\ref{conv1}) follows from the fact that  the  map $x \mapsto |x|^{p}$ is convex. If $f=g$ in (\ref{conv1}) then the constant $2^{p-1}$ is sharp and the inequality becomes equality. On the other hand, if $f$ and $g$ have disjoint supports then the constant $2^{p-1}$ is not needed. We remark that the estimate (\ref{conv1}) reflects the convexity of the unit
ball in $L^p$, which is equivalent to the usual $L^p$ triangle (Minkowski) inequality (see e.g. \cite{CFIL}). 

In \cite{Carb1}, Carbery asked under what conditions on the sequence of functions $\{f_{j}\}\subset L^{p}$ the inequality $\sum \| f_{j} \|_{p}^{p} <\infty$ would imply  $\sum f_{j} \in L^{p}$.  If we try to adapt the inequality (\ref{conv1}) to say $n$ number of functions $f_{1}, f_{2}, \ldots, f_{n}$ instead of two, then the constant $2^{p-1}$ should be replaced by $n^{p-1}$ which grows with $n$. To remove dependence  on $n$ Carbery suggested several extensions of inequality (\ref{conv1}) which were motivated by the estimate $\|f+g\|_{2}^{2} \leq  \|f\|_{2}^{2}+2\|fg\|_{1}+\|g\|^{2}_{2}$. All of them involve the extra parameter $\|fg\|_{p/2}^{p/2}$, which measures the ``overlap'' between the functions, and the strongest one in case of two functions he could prove only for indicator functions of sets. Recently a sharpened form of the triangle inequality was obtained \cite{CFIL} which implied the proposed estimates of Carbery's. Namely, take any $p \in \mathbb{R} \setminus\{0\}$, and put
\begin{align*}
\|f\|_{p} := \left(\int_{X} |f|^{p} d\mu\right)^{1/p}  \quad \text{and} \quad \Gamma_{p} := \frac{2\|fg\|_{p/2}^{p/2}}{\|f\|_{p}^{p}+\|g\|_{p}^{p}}.
\end{align*}
Then
\begin{align}\label{carlen}
\|f+g\|^{p}_{p} \leq \left(1+\Gamma_{p}^{2/p} \right)^{p-1} (\|f\|^{p}_{p}+\|g\|^{p}_{p})
\end{align}
holds true if $p\in (0, 1]\cup [2,\infty)$, and the inequality reverses if $p \in (-\infty, 0)\cup (1,2)$, where in the latter case we assume that $f,g$ are positive almost everywhere. Since by Cauchy--Schwarz $\Gamma_{p} \in [0,1]$ for all $p \in \mathbb{R}\setminus\{0\}$ we see that (\ref{carlen}) improves on the trivial bound (\ref{conv1}). 

In this paper we continue investigation in this direction and we address the following questions:
\begin{itemize}
\item[1.] Can one further sharpen the right hand side of the estimate (\ref{carlen}) if we are allowed to use only the quantities $\|f\|_{p}, \|g\|_{p}, \|fg\|_{p/2}$?
\item[2.] What is the optimal upper bound on $\|f+g\|_{p}^{p}$ in terms of the quantities $\|f\|_{p}, \|g\|_{p}, \|fg\|_{p/2}$, also when $p \in (-\infty, 0)\cup (1,2)$? 
The same question about lower bounds on $\|f+g\|_{p}^{p}$, also when $p\in (0, 1]\cup [2,\infty)$. 
\item[3.] Can one extend these estimates to many functions, more than 2? 
\end{itemize}
We will give complete answers to Questions $1$ and $2$, and we will provide an answer to Question $3$ when $p > 0.$ 
In particular we show that for $p \in [1,2]$, if $\sum_j \|f_j\|_p^p < \infty$ and $\displaystyle\sum_{i<j} \|f_j f_j\|_{p/2}^{p/2} < \infty$, then $\sum_j f_j \in L^p$.

%%%%%%%%%%%%%%%%%%%%%%%%%%%%%%%%%%%%%%%%%%%%%%%%%%%%%%%%%%%%%%%%%%%%%%%%%%%%%%%%%%%%%%%%%%%%%%%%%%%%%%%%%%%%

\section{Main results}
 Let $(X, \mathcal{A}, \mu)$ be an arbitrary measure space. In what follows we consider functions $f,\,g$ on $X$ that are measurable and nonnegative. Given $p  \in \mathbb{R} \setminus\{0\}$ we will be always assuming that $\|f\|_{p}^{p},\, \|g\|_p^p < \infty$. When $p < 0$ we allow $f,\,g$ to take the value $+\infty$, where we understand $f^p,\,g^p = 0$.

\begin{theorem}\label{teor1}
For any $p\in (0,1]\cup [2,\infty)$, and any nonnegative  $f,g$ on any measure space  we have 
\begin{align}\label{pir1}
\|f+g\|^{p}_{p} \leq \left( \left(\frac{1+\sqrt{1-\Gamma_{p}^{2}}}{2}\right)^{1/p}+\left(\frac{1-\sqrt{1-\Gamma_{p}^{2}}}{2}\right)^{1/p}\right)^{p} (\|f\|^{p}_{p}+\|g\|^{p}_{p}).
\end{align} 
 The inequality reverses if $p \in (-\infty, 0)\cup [1,2]$. Equality holds if  $(fg)^{p/2} = k (f^{p}+g^{p})$ for some constant $k \in \left[0,\frac{1}{2}\right]$. 
\end{theorem}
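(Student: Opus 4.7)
The plan is to reduce (\ref{pir1}) to a one-variable Jensen inequality. Setting $u = f^p$ and $v = g^p$, the integrand $(u^{1/p} + v^{1/p})^p = (f+g)^p$ is positively homogeneous of degree $1$ in $(u,v)$, and so factors as
\begin{equation*}
(u^{1/p}+v^{1/p})^p \;=\; (u+v)\, z\!\left(\frac{\sqrt{uv}}{u+v}\right),
\end{equation*}
where $z:[0,1/2]\to\mathbb{R}_{+}$ is
\begin{equation*}
z(\sigma) := \left(\Bigl(\tfrac{1+\sqrt{1-4\sigma^{2}}}{2}\Bigr)^{1/p} + \Bigl(\tfrac{1-\sqrt{1-4\sigma^{2}}}{2}\Bigr)^{1/p}\right)^{p}.
\end{equation*}
Let $d\nu:=(u+v)\,d\mu/(\|f\|_{p}^{p}+\|g\|_{p}^{p})$, a probability measure (the degenerate case $\|f\|_{p}^{p}+\|g\|_{p}^{p}=0$ is immediate), and $\xi:=\sqrt{uv}/(u+v)\in[0,1/2]$. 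Then $\int\xi\,d\nu=\Gamma_{p}/2$, the coefficient on the right-hand side of (\ref{pir1}) is precisely $z(\Gamma_{p}/2)$, and (\ref{pir1}) becomes
\begin{equation*}
\int z(\xi)\,d\nu \;\leq\; z\!\left(\int\xi\,d\nu\right),
\end{equation*}
which is Jensen's inequality for $z$ with respect to $\nu$.

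The theorem is therefore equivalent to the one-variable statement that $z$ is concave on $[0,1/2]$ for $p\in(0,1]\cup[2,\infty)$ and convex on $[0,1/2]$ for $p\in(-\infty,0)\cup[1,2]$. Modulo this, the equality condition also follows: the stated $(fg)^{p/2}=k(f^{p}+g^{p})$ with $k\in[0,1/2]$ says exactly that $\xi\equiv k$ is $\nu$-a.e.\ constant, which is the Jensen equality case (strict in the interior of each $p$-regime since $z$ will be strictly concave/convex there). The endpoint values $p=1$ and $p=2$ are degenerate because $z\equiv1$ and $z(\sigma)=1+2\sigma$ respectively, so (\ref{pir1}) then holds as an identity.

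To prove the concavity/convexity I would parametrize by $w:=\sqrt{1-4\sigma^{2}}\in[0,1]$, under which $z(\sigma)=\tfrac{1}{2}[(1+w)^{1/p}+(1-w)^{1/p}]^{p}=:\phi(w)$. A direct chain-rule computation gives
\begin{equation*}
z''(\sigma) \;=\; \frac{4}{w^{3}}\bigl[w(1-w^{2})\phi''(w)-\phi'(w)\bigr],
\end{equation*}
so on $(0,1/2)$ the sign of $z''(\sigma)$ matches that of the bracket. Computing $\phi'$ and $\phi''$ via $\phi=\tfrac{1}{2}\eta^{p}$ with $\eta(w)=(1+w)^{1/p}+(1-w)^{1/p}$, I expect an overall factor of $p(p-1)(p-2)$ to emerge upon simplification. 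This polynomial (i) vanishes exactly at the transitional values $p=1,2$, and (ii) has the correct sign in each $p$-region so that $-\operatorname{sgn}(p(p-1)(p-2))$ agrees with $\operatorname{sgn}(z'')$ in the claimed ranges of concavity and convexity (concave for $p\in(0,1)\cup(2,\infty)$, convex for $p\in(-\infty,0)\cup(1,2)$). The trigonometric substitution $u=\cos^{2}\theta$, $v=\sin^{2}\theta$ (so that $\sigma=\tfrac{1}{2}\sin2\theta$) exploits the reflection symmetry $u\leftrightarrow v$ and should make this factorization more transparent.

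The main obstacle is this final algebraic step --- cleanly extracting the factor $p(p-1)(p-2)$ from $w(1-w^{2})\phi''(w)-\phi'(w)$ and showing that the complementary factor has constant sign on $(0,1)$. The rest of the argument (the Jensen reduction and the equality analysis) is essentially formal once the right change of variables has been identified.
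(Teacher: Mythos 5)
Your reduction is correct, and it is essentially the paper's argument in streamlined form: the paper phrases the Jensen step via a one-homogeneous concave/convex extension $H$ of the boundary data $(x^{1/p}+y^{1/p})^p$ to the cone $\Omega=\{0\le z\le\sqrt{xy}\}$, integrated against $(f+g)^p\,d\mu/\|f+g\|_p^p$; since the relevant extension $F_p$ depends only on $x+y$ and $z$, this collapses to exactly your one-variable Jensen inequality for $z(\sigma)$ against $(f^p+g^p)\,d\mu$. Your identification of the equality case and of the degenerate endpoints $p=1,2$ is also correct, and your chain-rule formula for $z''$ checks out.

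The genuine gap is the one you flag yourself: the concavity/convexity of $z$ on $[0,1/2]$, which is the entire analytic content of the theorem, is not proved but only reduced to, and your proposed route --- extracting a clean overall factor $p(p-1)(p-2)$ from $w(1-w^2)\phi''(w)-\phi'(w)$ with a complementary factor of constant sign --- does not materialize in that form. What the computation actually yields (the paper does it with $t=\sin 2s$, $w=\tan s$, $x=\tan^2 s$) is that $\sign(u'')=\sign(v(x))$ where
\begin{equation*}
v(x)=x^{\frac2p-1}+\Bigl(\tfrac2p-1\Bigr)x^{\frac1p-1}(1-x)-1,\qquad x\in[0,1],
\end{equation*}
and $v$ does not carry a visible factor of $p(p-1)(p-2)$: its sign on $(0,1)$ is determined by noting $v(1)=0$ and
\begin{equation*}
v'(x)=x^{\frac1p-2}\Bigl(\tfrac2p-1\Bigr)\Bigl(x^{\frac1p}-\bigl(1+\tfrac1p(x-1)\bigr)\Bigr),
\end{equation*}
so the sign comes from the product of $\sign\bigl(\tfrac2p-1\bigr)$ with the Bernoulli-type comparison of $x^{1/p}$ against its tangent line at $x=1$ (concave for $p>1$ or $p<0$ in the sense $1/p\in(0,1)$, convex for $p\in(0,1)$ and $p<0$ in the sense $1/p<0$, etc.). These two signs conspire case by case to give $\sign(u'')=-\sign\bigl(p(p-1)(p-2)\bigr)$, but the mechanism is a monotonicity argument for $v$ rather than an algebraic factorization, and without some such argument your proof is incomplete at its central step.
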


\begin{remark}\label{bestp}
The right hand side of  (\ref{pir1}) is the best possible in the following sense: consider the measure space  $([0,1], \mathcal{B}, dx)$. Pick any nonnegative numbers $x,y$ and $z$ such that $0 \leq z \leq \sqrt{xy}$. Then, for any $p\in (0,1]\cup [2,\infty)$ the supremum of the left hand side of (\ref{pir1}) over all nonnegative $f,g$ with fixed  $\|f\|^{p}_{p}=x, \|g\|^{p}_{p}=y, \|fg\|^{p/2}_{p/2}=z$ coincides with the right hand side of (\ref{pir1}). Similarly, for any $p \in (-\infty, 0)\cup [1,2]$ the infimum of the left hand side of (\ref{pir1}) over all such $f,g$ coincides with the right hand side of (\ref{pir1}). We justify this remark in Section \ref{Reductions}.
\end{remark}
Remark \ref{bestp} implies in particular that Theorem~\ref{teor1} refines the estimate (\ref{carlen}).
As a consequence we have the following peculiar estimate:
\begin{corollary}
For any $p\in (0,1]\cup [2,\infty)$, and any number  $\Gamma \in [0,1]$  we have 
\begin{equation}\label{Improve1}
\left( \left(\frac{1+\sqrt{1-\Gamma^{2}}}{2}\right)^{1/p}+\left(\frac{1-\sqrt{1-\Gamma^{2}}}{2}\right)^{1/p}\right)^{p} \leq \left(1+\Gamma^{2/p} \right)^{p-1}.
\end{equation}
The inequality reverses if $p \in (-\infty, 0)\cup [1,2]$. 
\end{corollary}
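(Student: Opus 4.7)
The corollary is a straightforward consequence of Theorem \ref{teor1} combined with the earlier bound (\ref{carlen}). My plan is to compare the two estimates on a single pair of extremal functions. By Remark \ref{bestp}, the left-hand side of (\ref{Improve1}) equals, for $p \in (0,1] \cup [2,\infty)$, the supremum of $\|f+g\|_p^p / (\|f\|_p^p + \|g\|_p^p)$ over nonnegative $f,g$ with $\Gamma_p = \Gamma$, and the infimum of the same ratio for $p \in (-\infty,0)\cup (1,2)$. Meanwhile (\ref{carlen}) furnishes a competing upper bound $(1+\Gamma^{2/p})^{p-1}$ on this ratio (respectively a lower bound in the reversed range). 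Comparing the two bounds gives (\ref{Improve1}) and its reverse directly.

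To make this argument fully explicit, I would exhibit the extremal pair by hand. The equality condition in Theorem \ref{teor1} reads $(fg)^{p/2} = k(f^p + g^p)$ with $k = \Gamma/2 \in [0, 1/2]$, and it admits a trivial solution on a one-point measure space of unit mass: take $f \equiv 1$ and $g \equiv t$ with $t > 0$ chosen to solve $t^{p/2} = \tfrac{\Gamma}{2}(1 + t^p)$. Setting $A = \tfrac{1}{2}(1-\sqrt{1-\Gamma^2})$ and $B = \tfrac{1}{2}(1+\sqrt{1-\Gamma^2})$, so that $A+B = 1$ and $AB = \Gamma^2/4$, I would pick $t^p = A/B$; the identity
\[
\frac{(1+t)^p}{1+t^p} \;=\; \bigl(A^{1/p} + B^{1/p}\bigr)^p
\]
is then immediate after factoring $B^{1/p}$ from the numerator and $B$ from the denominator, and it matches the left-hand side of (\ref{Improve1}).

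Applying (\ref{carlen}) to this pair (which is strictly positive, so the reverse regime is covered as well) yields $(A^{1/p}+B^{1/p})^p \leq (1+\Gamma^{2/p})^{p-1}$ for $p \in (0,1]\cup[2,\infty)$ and the reverse inequality for $p \in (-\infty,0)\cup(1,2)$. The boundary values $\Gamma \in \{0,1\}$ and $p \in \{1,2\}$ give equality on both sides by direct substitution, so the limit cases require no separate treatment. The derivation is entirely algebraic once the extremal pair is in hand, and I anticipate no real obstacle beyond the bookkeeping in the displayed identity above.
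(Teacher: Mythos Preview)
Your proposal is correct and follows essentially the same route as the paper: the corollary is deduced from the sharpness of Theorem~\ref{teor1} (Remark~\ref{bestp}) together with the bound~(\ref{carlen}), and your explicit one-point extremal pair $(f,g)=(1,t)$ is exactly the substitution $\Gamma = 2(ab)^{p/2}/(a^p+b^p)$ that the paper uses to identify (\ref{Improve1}) with the pointwise inequality~(\ref{ut33}).
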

\noindent If we set $\Gamma := 2\frac{(ab)^{p/2}}{a^p + b^p}$ for nonnegative $a,\,b$, then after a short computation inequality (\ref{Improve1}) becomes
\begin{align}\label{ut33}
\frac{(a + b)^p}{a^p + b^p} \leq \left(1+\left(2\frac{(ab)^{p/2}}{a^p + b^p}\right)^{2/p} \right)^{p-1}.
\end{align}
The above inequality is Theorem $1.3$ from \cite{CFIL}, with $\alpha := \frac{a}{a+b}.$ 

We should mention that estimate (\ref{ut33}) does not follow solely from Theorem~\ref{teor1}. It follows from the fact that both inequalities (\ref{pir1}) and (\ref{carlen}) hold true and the fact that (\ref{pir1}) is sharp in a sense of Remark \ref{bestp}. On the other hand, by comparing the right hand sides of (\ref{pir1}) and (\ref{carlen})  one arrives at (\ref{ut33}) which coincides with Theorem~1.3 in \cite{CFIL} where it is also proved that (\ref{ut33}) implies (\ref{carlen}). 

\begin{remark}
If we let $q:=1/p$ and $x = \sqrt{1-\Gamma^{2}}$, then inequality (\ref{Improve1}) can also be written as the following ``two-point inequality:''
\begin{equation}\label{TwoPoint}
\frac{(1+x)^{q} + (1-x)^{q}}{2} \leq \left(\frac{1+(1-x^{2})^{q}}{2}\right)^{1-q}
\end{equation}
for all $q  \in \left(-\infty,\frac{1}{2}\right]\cup[1,\infty) , \; x \in [0,1]$, and the inequality reverses if $q \in \left[\frac{1}{2},1\right)$. For each fixed $q \geq 2$, inequality (\ref{TwoPoint})
improves inequality ($1.7$) from \cite{BCL} (the Gross two-point inequality) for $X = 1$ and $Y$ close to $0$, using the notation in \cite{BCL}.
\end{remark}

\vskip0.5cm

Next, let $p \in \mathbb{R}\setminus\{0\}$, and set\footnote{If $\|fg\|_{p/2}^{p/2}=0$ then we set $C_{p}=1$.}
\begin{align*}
C_{p} :=  \frac{\min\{ \|f\|_{p}^{p}, \|g\|_{p}^{p}, \|fg\|_{p/2}^{p/2}\}}{\|fg\|_{p/2}^{p/2}}. 
\end{align*}
 
\begin{theorem}\label{teor2}
For any $p\in (1,\,2)$ and any nonnegative  $f,\,g$ on any measure space  we have 
\begin{align}\label{pir2}
\|f+g\|^{p}_{p} \leq  \|f\|_{p}^{p} + \|g\|_{p}^{p} +  \left( (C_{p}^{-1/p}+C_{p}^{1/p})^{p} - C^{-1}_{p} - C_{p}\right) \|fg\|_{p/2}^{p/2}.
\end{align} 
The inequality reverses if $p \in (0,1] \cup [2,\infty)$. Equality holds in (\ref{pir2}) if one of the following three conditions holds: $f = g$ on $\{fg > 0\}$, $g = \lambda f$ on $\{f > 0\}$ for some $\lambda \geq 1$, or $f = \lambda g$ on $\{g > 0\}$ for some $\lambda \geq 1$.   

For $p \in (-\infty,\,0)$ we have 
\begin{align}\label{pir3}
\|f+g\|^{p}_{p} \leq  \left(C_{p}^{-1/p}+C_{p}^{1/p}\right)^{p} \|fg\|_{p/2}^{p/2}.
\end{align}
Equality holds in (\ref{pir3}) if one of the following three conditions holds: $f = g$ on $\{fg < \infty\}$, $g = \lambda f$ on $\{f < \infty\}$ for some $\lambda \leq 1$, or $f = \lambda g$ on $\{g < \infty\}$ for some $\lambda \leq 1$.   

\end{theorem}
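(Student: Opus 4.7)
\emph{Reduction.} The plan is to reduce both inequalities to one-variable tangent-line estimates via the substitution $t = g/f$, splitting the analysis by whether $\|fg\|_{p/2}^{p/2}$ exceeds $\min(\|f\|_p^p,\|g\|_p^p)$. By symmetry assume $A := \|f\|_p^p \leq B := \|g\|_p^p$, and write $Z := \|fg\|_{p/2}^{p/2}$. On $\{0 < f < \infty\}$ set $d\nu := f^p \, d\mu$ and $t := g/f$; then
\[
\int_X (f+g)^p \, d\mu = \int (1+t)^p \, d\nu + \int_{\{f=0\}} g^p \, d\mu,
\]
with $\int d\nu = A$, $\int t^{p/2} \, d\nu = Z$, and $\int t^p \, d\nu + \int_{\{f=0\}} g^p d\mu = B$. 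Introducing $s := t^{p/2}$ and
\[
\tilde H(s) := (1+s^{2/p})^p - s^2 - 1,
\]
the claim (\ref{pir2}) reduces to $\int \tilde H(s) \, d\nu_s \leq \Phi(C_p) Z$, where $\Phi(C_p) := (C_p^{-1/p} + C_p^{1/p})^p - C_p^{-1} - C_p$ is the coefficient of $Z$ in (\ref{pir2}) and $\nu_s$ is the pushforward of $\nu$ under $s = t^{p/2}$.

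\emph{Two tangent estimates.} In Case I ($Z \leq A$), one has $C_p = 1$ and $\Phi = 2^p - 2$; the key pointwise inequality is
\[
\tilde H(s) \leq (2^p - 2)\, s, \qquad s \geq 0, \ p \in [1, 2],
\]
which is the tangent to $\tilde H$ at $s = 1$: because $\tilde H(1) = \tilde H'(1) = 2^p - 2$, the tangent happens to pass through the origin, so integration against $\nu_s$ immediately yields $(2^p - 2) Z$. In Case II ($Z > A$), set $s_0 := Z/A > 1$. The key step is the tangent-line inequality
\[
\tilde H(s) \leq \tilde H(s_0) + \tilde H'(s_0)(s - s_0), \qquad s \geq 0, \ s_0 \geq 1, \ p \in [1, 2].
\]
Integrating against $\nu_s$ with $\int d\nu_s = A$ and $\int s \, d\nu_s = Z = s_0 A$ telescopes the linear term, leaving $\int \tilde H \, d\nu_s \leq \tilde H(s_0)\, A$. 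A short computation using $(s_0^{1/p} + s_0^{-1/p})^p = (1+s_0^{2/p})^p/s_0$ yields $\tilde H(s_0)/s_0 = \Phi(C_p)$, so $\tilde H(s_0)\, A = \Phi(C_p)\, Z$, as required.

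\emph{Reverses, the $p<0$ case, and equality.} For $p \in (0, 1] \cup [2, \infty)$ both tangent inequalities reverse (the relevant concavity properties of $\tilde H$ flip), and the same integration yields the reverse of (\ref{pir2}). The equality characterizations follow by identifying when the tangent inequality is saturated: at $s \in \{0, 1\}$ in Case I (yielding $f = g$ on $\{fg > 0\}$ with arbitrary disjoint-support parts), and at $s = s_0$ in Case II (yielding $g = \lambda f$ on $\{f > 0\}$ with $\lambda = s_0^{2/p} \geq 1$, or the symmetric condition $f = \lambda g$). For (\ref{pir3}) with $p < 0$, the finiteness of $A$ and $B$ forces $\mu\{f = 0\} = \mu\{g = 0\} = 0$, so $\int (f+g)^p \, d\mu = \int (1+t)^p \, d\nu$ directly; the claim reduces to $\int K(s) \, d\nu_s \leq (C_p^{-1/p} + C_p^{1/p})^p\, Z$ with $K(s) := (1+s^{2/p})^p$, and the same tangent-at-$s_0$ strategy applies to $K$ in place of $\tilde H$.

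\emph{Main obstacle.} The principal technical difficulty is the global tangent inequality in Case II: $\tilde H$ is convex near $s = 0$ and concave for large $s$, with an inflection in $(0, 1)$ that depends on $p$, so the tangent-line upper bound does not follow from global concavity. Proving that the tangent at any $s_0 \geq 1$ nevertheless dominates $\tilde H$ on all of $[0, \infty)$ requires a careful calculus argument exploiting the specific exponent structure and the relation between $p$ and $2/p$; the threshold $s_0 \geq 1$ (equivalently $Z \geq A$) is essential and marks the transition between the two cases. The analogous concavity analysis must be repeated for $K$ in the $p < 0$ case.
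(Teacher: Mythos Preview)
Your outline is correct and the algebra checks out; in particular the identity $\tilde H(s_0)A=\Phi(C_p)Z$ in Case~II is right, and your handling of the set $\{f=0\}$ (resp.\ $\{f=\infty\}$ when $p<0$) is clean. However, the route differs from the paper's.

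The paper does not substitute $t=g/f$. Instead it builds a one–homogeneous function $G_p$ on the cone $\Omega=\{0\le z\le\sqrt{xy}\}$ with boundary values $\varphi_p(x,y,\sqrt{xy})=(x^{1/p}+y^{1/p})^p$, proves $G_p$ is concave (for $p\in(1,2)$) or convex (for $p\in(0,1]\cup[2,\infty)$), and then applies Jensen's inequality to the $\Omega$-valued map $\omega\mapsto(f^p,g^p,(fg)^{p/2})$. The concavity/convexity of $G_p$ on $\Omega$ is reduced, via one–homogeneity and a cross-section, to showing that
\[
h(t)=(t^{1/p}+t^{-1/p})^p-(t+t^{-1}),\qquad t\in(0,1],
\]
is concave with $h'(1)=0$ (resp.\ convex). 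Your function is exactly $\tilde H(s)=s\,h(s)$, using the symmetry $h(1/s)=h(s)$. Your ``main obstacle''—that the tangent to $\tilde H$ at any $s_0\ge1$ dominates $\tilde H$ on all of $[0,\infty)$—follows from the paper's lemma in two steps: on $[1,\infty)$ one has $\tilde H''(s)=h''(1/s)/s^3\le0$, so concavity gives the tangent bound there; on $[0,1]$ the monotonicity of $h$ (from $h'(1)=0$ and concavity) yields $\tilde H(s)=s\,h(s)\le(2^p-2)s$, which is the tangent at $s_0=1$, and any tangent at $s_0>1$ lies above this line on $[0,1]$ since tangents to a concave function on $[1,\infty)$ fan out. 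So the calculus core of the two approaches is identical.

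What each buys: your reduction is more elementary—no three-dimensional envelope machinery, and the two cases $Z\le A$ and $Z>A$ emerge naturally from where the tangent is taken. The paper's envelope framework, on the other hand, automatically delivers the \emph{optimality} of the bound (Remark after Theorem~\ref{teor2}): once one knows $G_p$ is the concave envelope of $\varphi_p$, the matching Bellman functions $\overline B_p,\underline B_p$ on $([0,1],dx)$ coincide with $G_p$, which your pointwise-tangent argument does not address.
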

\noindent Exactly the same remark as before applies to Theorem~\ref{teor2}; that is, the right hand sides of (\ref{pir2}) and (\ref{pir3}) are the best possible.
Together, Theorems \ref{teor1} and \ref{teor2}, along with the remarks about optimality, answer Questions $1$ and $2$.

Finally, we state a partial answer to Question $3$ in the case $p > 0$.
\begin{corollary}\label{cor2}
For any $p \in  [1,2]$, and any sequence of nonnegative functions $\{f_{j}\}_{j \geq 1}$ 
we have 
\begin{align*}
\| \sum_{j} f_{j}\|_{p}^{p} \leq \sum_{j}\|f_{j}\|_{p}^{p} + (2^{p}-2)\sum_{i<j}\|f_{i}f_{j}\|_{p/2}^{p/2}.
\end{align*} 
If $p \in (0,1] \cup [2, \infty)$ the inequality reverses. Equality holds if and only if
\begin{align*}
\left( \sum_{j} f_{j} \right)^{p} = \sum_{j} f_{j}^{p} + (2^{p}-2) \sum_{i<j} (f_{j}f_{j})^{p/2}
\end{align*}
almost everywhere.
\end{corollary}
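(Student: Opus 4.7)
The plan is to reduce the statement to a pointwise scalar inequality in the values $a_j = f_j(x)$ and then integrate. The crux is the two-term case: for all $a, b \geq 0$ and $p \in [1, 2]$,
\[
(a + b)^p \leq a^p + b^p + (2^p - 2)(ab)^{p/2},
\]
with the reverse inequality for $p \in (0, 1] \cup [2, \infty)$. This scalar inequality is the main obstacle; once it is in hand the corollary follows by a short induction.

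To prove the two-term scalar inequality I would use $p$-homogeneity to assume $ab = 1$ and substitute $a = e^s$, $b = e^{-s}$ with $s \geq 0$, converting it into $H(s) \leq 2^p - 2$ where $H(s) := (2\cosh s)^p - 2\cosh(ps)$. Since $H(0) = 2^p - 2$, it suffices to show $H'(s) \leq 0$, i.e.,
\[
\sinh(s)(2\cosh s)^{p-1} \leq \sinh(ps).
\]
The key observation is that $q \mapsto \log \sinh(qs)$ is strictly concave for every $s > 0$, since $\frac{d^2}{dq^2}\log\sinh(qs) = -s^2/\sinh^2(qs) < 0$. For $p \in [1, 2]$ the value $p$ is a convex combination of $1$ and $2$ with weights $2-p$ and $p-1$, so the concave graph lies above the chord: $\log \sinh(ps) \geq (2-p)\log\sinh s + (p-1)\log\sinh(2s)$. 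Exponentiating and using $\sinh(2s) = 2\sinh s\cosh s$ yields exactly the displayed estimate. For $p \in (0, 1] \cup [2, \infty)$ the point $p$ lies outside $[1, 2]$, so the chord-graph comparison reverses and the two-term inequality reverses correspondingly.

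With the scalar case in hand I would induct on the number of functions. Writing $S_n := \sum_{j \leq n} f_j$, apply the two-term inequality pointwise to $S_n$ and $f_{n+1}$ to get $(S_n + f_{n+1})^p \leq S_n^p + f_{n+1}^p + (2^p - 2)(S_n f_{n+1})^{p/2}$. The mixed term is controlled by subadditivity of $t \mapsto t^{p/2}$ for $p \leq 2$: $S_n^{p/2} \leq \sum_{j \leq n} f_j^{p/2}$, hence $(S_n f_{n+1})^{p/2} \leq \sum_{j \leq n}(f_j f_{n+1})^{p/2}$. Combining with the inductive hypothesis on $S_n^p$ (pointwise) and then integrating over $X$ yields the functional inequality. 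For $p \in (0, 1]$ the sign of $2^p - 2$ flips, and combined with the same subadditivity (the inequality $(2^p-2)x \geq (2^p-2)y$ whenever $x \leq y$ and $2^p-2 \leq 0$) this preserves the correct overall direction; for $p \geq 2$, superadditivity of $t^{p/2}$ replaces subadditivity and again both reversals align. In each regime the induction closes. The equality characterization follows immediately: the inequality was derived by integrating a pointwise estimate between nonnegative quantities, so equality in the integrated version forces equality of integrands almost everywhere, which is precisely the identity stated in the corollary.
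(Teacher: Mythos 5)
Your proposal is correct, and the induction-plus-integration part coincides with the paper's argument: the paper likewise reduces to the pointwise two-term inequality $(a+b)^p \leq a^p+b^p+(2^p-2)(ab)^{p/2}$, closes the induction using sub/superadditivity of $t\mapsto t^{p/2}$ with the same sign bookkeeping on $2^p-2$, and then integrates. Where you genuinely diverge is in how that key scalar inequality is obtained. The paper does not prove it directly: it extracts it from the envelope machinery, observing (Remark \ref{TriangleSupport}) that the convex/concave envelope $G_p$ agrees with the linear function $x+y+(2^p-2)z$ on an open cone and hence sits on the appropriate side of it, and then invoking Theorem \ref{teor2}. You instead give a self-contained elementary proof: normalize $ab=1$ by homogeneity, substitute $a=e^s$, $b=e^{-s}$, and reduce to $\sinh(s)(2\cosh s)^{p-1}\leq \sinh(ps)$, which follows from the concavity of $q\mapsto\log\sinh(qs)$ by writing $p=(2-p)\cdot 1+(p-1)\cdot 2$ and using $\sinh(2s)=2\sinh s\cosh s$; the reversal for $p$ outside $[1,2]$ comes from the chord comparison flipping outside the interpolation range. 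This checks out in all regimes (the factor $2p>0$ in $H'$ keeps the sign analysis honest for $p\in(0,1)$), and it buys independence from Sections \ref{Reductions}--\ref{EnvelopeComputations}: your Corollary needs none of the envelope computations, whereas the paper gets the scalar inequality essentially for free from structure it has already built. Two minor points you should make explicit in a final write-up: the passage from finitely many to infinitely many $f_j$ (monotone convergence on both sides), and, in the equality discussion, that what forces a.e.\ equality is that the \emph{difference} of the two integrands has a fixed sign (the individual integrands need not be comparable termwise).
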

\noindent In particular, when $p \in [1,\,2]$ we have that $\sum_j f_j \in L^p$ provided $\sum_j \|f_j\|_p^p < \infty$ and $\displaystyle\sum_{i<j} \|f_j f_j\|_{p/2}^{p/2} < \infty$.

The rest of the paper is organized as follows. In Section \ref{Reductions} we reduce the proofs of Theorems \ref{teor1} and \ref{teor2}, as well as the remarks about their optimality, 
to computing the concave and convex envelopes of a certain function defined on the boundary of a convex cone in $\mathbb{R}^3$. In Section \ref{EnvelopeComputations} we compute
these envelopes. Finally, in Section \ref{Corollary2} we prove Corollary \ref{cor2} using an observation about the proof of Theorem \ref{teor2}.

%Some of our estimates can be easily extended to any complex valued functions because of the inequality $\|f+g\|_{p} \leq \||f|+|g|\|_{p}$ for $p>0$. 

%%%%%%%%%%%%%%%%%%%%%%%%%%%%%%%%%%%%%%%%%%%%%%%%%%%%%%%%%%%%%%%%%%%%%%%%%%%%%%%%%%%%%%%%%%%%%%%%%%%%%%%%%%%%
\section{Reductions}\label{Reductions}
In this section we reduce Theorems \ref{teor1} and \ref{teor2} to computing explicitly the convex and concave envelopes of a certain function defined on the boundary of a convex cone in $\mathbb{R}^3$. Let
$$\Omega := \{x,\,y \geq 0, \, 0 \leq z \leq \sqrt{xy}\}$$
be the convex cone in $\mathbb{R}^3$ whose vertical cross-sections $\Omega \cap \{x + y = c > 0\}$ are half-ellipses. For $p \in \mathbb{R} \backslash \{0\}$ define $\varphi_p$ on $\partial \Omega$ by
$$\varphi_p(x,\,y,\,\sqrt{xy}) = (x^{1/p} + y^{1/p})^p,\, x,\,y > 0, \quad \varphi_p(x,\,y,\,0) = \begin{cases} x + y, \quad p > 0 \\ 0, \quad p < 0. \end{cases}$$
Let $f$ and $g$ be nonnegative functions on an arbitrary measure space $(X,\, \mathcal{A},\,\mu)$ with $\|f\|_p^p,\,\|g\|_p^p < \infty$. Note that the triple
$(\|f\|_p^p,\, \|g\|_p^p,\, \|fg\|_{p/2}^{p/2}) \in \Omega$
by the Cauchy-Schwarz inequality. By the equality case, if the triple is in $\partial \Omega$ we have $\|f + g\|_p^p = \varphi_p(\|f\|_p^p,\, \|g\|_p^p,\, \|fg\|_{p/2}^{p/2})$.
Our approach is based on the following lemma:
\begin{lemma}\label{GeneralConcavity}
Let $p \in \mathbb{R} \backslash \{0\}$, and assume that $H \in C(\Omega)$ is a concave, one-homogeneous function on $\Omega$ with $H|_{\partial \Omega} = \varphi_p$. Then
$$\|f + g\|_p^p \leq H\left(\|f\|_p^p,\, \|g\|_p^p,\, \|fg\|_{p/2}^{p/2}\right).$$
If $H$ is convex, the inequality reverses.
\end{lemma}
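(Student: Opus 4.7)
The plan is to reduce to simple functions by monotone approximation and, in the simple-function case, to exploit that $H$ is rigidly determined on $\partial\Omega$ by its boundary values $\varphi_p$. The central observation is that for every $a,b\ge 0$ the triple $(a^p,\,b^p,\,(ab)^{p/2})$ lies on $\partial\Omega$, since its third coordinate is the geometric mean of its first two; so if $f$ and $g$ are constant on a common measurable piece, the contribution of that piece is exactly a boundary value of $H$.

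First I would verify the easy algebraic fact that any concave, one-homogeneous function $H$ on a convex cone is superadditive: for $u,v\in\Omega$,
\[
H(u)+H(v)\;=\;2\cdot\tfrac{H(u)+H(v)}{2}\;\le\;2H\!\left(\tfrac{u+v}{2}\right)\;=\;H(u+v),
\]
with the inequality reversed if $H$ is convex. Next, suppose $f=\sum_i a_i\chi_{A_i}$ and $g=\sum_i b_i\chi_{A_i}$ with pairwise disjoint $A_i$ of finite measure $m_i$, and set $P_i:=(a_i^p m_i,\,b_i^p m_i,\,(a_ib_i)^{p/2}m_i)\in\partial\Omega$. Inspecting the formula for $\varphi_p$ on the three pieces of $\partial\Omega$ (including the edge cases $a_i=0,\,b_i=0$, or --- when $p<0$ --- $a_i=\infty,\,b_i=\infty$) yields $H(P_i)=\varphi_p(P_i)=(a_i+b_i)^p m_i$, so by superadditivity
\[
\|f+g\|_p^p=\sum_i (a_i+b_i)^p m_i=\sum_i H(P_i)\;\le\;H\!\left(\sum_i P_i\right)=H\!\left(\|f\|_p^p,\,\|g\|_p^p,\,\|fg\|_{p/2}^{p/2}\right).
\]

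For general $f,g$ I would pass to the limit along simple approximations. For $p>0$ take simple $f_n\uparrow f,\, g_n\uparrow g$, so by monotone convergence $\|f_n\|_p^p\to\|f\|_p^p$, $\|g_n\|_p^p\to\|g\|_p^p$, $\|f_n g_n\|_{p/2}^{p/2}\to\|fg\|_{p/2}^{p/2}$, and $\|f_n+g_n\|_p^p\to\|f+g\|_p^p$; continuity of $H$ then transfers the simple-function inequality to $f,g$. For $p<0$ the analogous step is to approximate $f^p$ and $g^p$ from below by nonnegative simple functions and invert to obtain decreasing simple approximations of $f,g$; since $p$ and $p/2$ are negative, all four integrals still increase to their targets and the argument closes. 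The convex case is symmetric.

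I expect the main obstacle to be the bookkeeping in the boundary-value verification: one has to match $\varphi_p(x,y,0)$ (which differs in form for $p>0$ and $p<0$) both with the limit coming in from the interior of the curved face and with the edge cases $a_i\in\{0,\infty\}$ that can appear under the paper's conventions for $0^p$ and $\infty^p$. Beyond that, the proof is a short concavity--homogeneity manipulation.
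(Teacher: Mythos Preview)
Your argument is correct, but it is organized differently from the paper's. The paper never approximates and never invokes superadditivity explicitly: instead it observes that the pointwise triple
\[
\left(\frac{f^p}{(f+g)^p},\ \frac{g^p}{(f+g)^p},\ \frac{(fg)^{p/2}}{(f+g)^p}\right)
\]
lies on $\partial\Omega$ wherever $f+g$ is finite and positive, so that $H$ of this triple is identically $1$; integrating this identity against the probability measure $(f+g)^p\,d\mu/\|f+g\|_p^p$ and applying Jensen's inequality (together with one-homogeneity) yields the result in one line. Your route---reduce to simple functions via the elementary fact that concave plus one-homogeneous implies superadditive, then pass to the limit by monotone approximation---reaches the same conclusion with the same essential convexity input, but trades the single Jensen step for a finite-sum inequality plus a limiting argument. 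The paper's version is shorter and avoids the case analysis in the approximation (especially the $p<0$ inversion you outline); your version, on the other hand, makes the mechanism transparent without needing to manufacture the auxiliary probability measure, and the superadditivity formulation is arguably more robust if one later wants to handle sums of more than two functions or vector-valued settings.
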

\begin{proof}
By the boundary conditions, we have
$$1 = H\left(\frac{f^p}{(f+g)^p},\, \frac{g^p}{(f+g)^p},\, \frac{(fg)^{p/2}}{(f+g)^p}\right)$$
on the set $X' = \{f + g > 0\}$ when $p > 0$, or $\{f + g < \infty\}$ when $p < 0$. Integrating this identity with respect to the probability measure $\frac{(f+g)^p\,d\mu}{\|f+g\|_p^p}$ on $X'$ and applying Jensen's inequality gives
$$1 \leq H\left(\frac{\|f\|_p^p}{\|f+g\|_p^p},\, \frac{\|g\|_p^p}{\|f+g\|_p^p}, \, \frac{\|fg\|_{p/2}^{p/2}}{\|f+g\|_p^p}\right)$$
when $H$ is concave, and the other inequality for $H$ convex. The result follows from the one-homogeneity of $H$.
\end{proof}
Lemma \ref{GeneralConcavity} reduces our problem to computing the concave and convex envelopes of $\varphi_p$ on $\Omega$. By concave envelope we mean the infimum of linear functions on $\Omega$ 
that are greater than $\varphi_p$ on $\partial \Omega$, and by convex envelope the supremum of linear functions on $\Omega$ that are smaller than $\varphi_p$ on $\partial \Omega$. 
Let $\overline{H}_p$ denote the concave envelope, and $\underline{H}_p$ the convex envelope. For $(x,\,y,\,z) \in \Omega$, define
$$w(x,\,y,\,z) := \frac{2z}{x+y}, \quad v(x,\,y,\,z) := \min\left\{\frac{x}{z},\, \frac{y}{z},\,1\right\},$$
where we take $w = 0$ at the origin and $v = 1$ on $\Omega \cap \{z = 0\}$. Define the one-homogeneous functions $F_p,\,G_p$ on $\Omega$ by
\begin{align}
F_p(x,\,y,\,z) := \frac{x+y}{2}\,\left((1+\sqrt{1-w^2})^{1/p} + (1-\sqrt{1-w^2})^{1/p}\right)^p, \label{env11}\\
G_p(x,\,y,\,z) := \begin{cases} x + y + \left((v^{1/p} + v^{-1/p})^p - (v + v^{-1})\right)\,z, \quad p > 0 \\ (v^{1/p} + v^{-1/p})^p \,z, \quad p < 0. \end{cases}\label{env22}
\end{align}

\begin{proposition}\label{EnvelopeFormulae}
The concave and convex envelopes $\overline{H}_p,\, \underline{H}_p$ of $\varphi_p$ in $\Omega$ are in $C(\Omega)$ and are given explicitly by the formulae
$$\overline{H}_p = \begin{cases}
F_p, \quad p \in (0,\,1] \cup [2,\,\infty), \\
G_p, \quad p \in (-\infty,\,0) \cup (1,\,2)
\end{cases}$$
and
$$\underline{H}_p = \begin{cases}
F_p, \quad p \in (-\infty,\,0) \cup (1,\,2), \\
G_p, \quad p \in (0,\,1] \cup [2,\,\infty).
\end{cases}$$
\end{proposition}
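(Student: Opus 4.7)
The plan is to prove Proposition~\ref{EnvelopeFormulae} in three steps for each of the four claimed envelope formulas. First, verify directly that $F_p,\,G_p$ are continuous and one-homogeneous on $\Omega$ and agree with $\varphi_p$ on $\partial\Omega$; the check on $\{z=\sqrt{xy}\}$ uses the identity $\sqrt{1-w^{2}}=|x-y|/(x+y)$, and on $\{z=0\}$ it uses the appropriate conventions for the formulas. Second, establish the claimed concavity or convexity of the candidate on $\Omega$. Third, for each interior $P$, exhibit an explicit convex combination $P=\sum_i\lambda_i P_i$ with $P_i\in\partial\Omega$ whose $\varphi_p$-barycenter $\sum_i\lambda_i\varphi_p(P_i)$ equals the candidate's value at $P$.

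These three ingredients identify the envelope: when $F_p$ is concave, a supporting affine tangent $\ell_P$ from above satisfies $\ell_P\ge F_p=\varphi_p$ on $\partial\Omega$, so $\ell_P$ is admissible in the infimum defining $\overline{H}_p$ and $\overline{H}_p(P)\le\ell_P(P)=F_p(P)$; conversely, any affine $\ell\ge\varphi_p$ on $\partial\Omega$ satisfies $\ell(P)=\sum_i\lambda_i\ell(P_i)\ge\sum_i\lambda_i\varphi_p(P_i)=F_p(P)$, giving $\overline{H}_p(P)\ge F_p(P)$. The case of the convex envelope is symmetric, and the same convex decomposition serves both directions.

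For $F_p$, one-homogeneity lets us restrict to the cross-section $\{x+y=2\}\cap\Omega$, a half-disk; on this slice $F_p$ depends only on $w=z$, so concavity/convexity of $F_p$ on $\Omega$ reduces to that of the single-variable function $f_p(w):=\bigl((1+\sqrt{1-w^{2}})^{1/p}+(1-\sqrt{1-w^{2}})^{1/p}\bigr)^{p}$ on $[0,1]$. After substituting $u=\sqrt{1-w^{2}}$, this becomes a sign analysis of a second derivative, closely tied to the two-point inequality~(\ref{TwoPoint}). The matching decomposition uses the mirror pair $(a,b,z),\,(b,a,z)$ on the surface $z=\sqrt{xy}$ with $a=\tfrac{x+y}{2}(1+u)$ and $b=\tfrac{x+y}{2}(1-u)$ (so $ab=z^{2}$); the mixing weight $\lambda=\tfrac12+\tfrac{x-y}{2u(x+y)}\in[0,1]$ returns $P$, and the barycenter simplifies to $(a^{1/p}+b^{1/p})^{p}=\tfrac{x+y}{2}f_p(w)=F_p(P)$.

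For $G_p$, the cone $\Omega$ splits into three regions according to whether $v$ equals $1$ (i.e., $\min(x,y)\ge z$), $x/z$, or $y/z$; on each $G_p$ has an explicit closed form, and is affine on the first region. The concavity/convexity is established on each region and then shown to glue properly across the interfaces. The matching decompositions are: on $\{\min(x,y)\ge z\}$, take $P=\lambda(a,a,a)+(1-\lambda)(b_{1},b_{2},0)$ with $\lambda a=z$, $b_{1}=(x-z)/(1-\lambda)$, $b_{2}=(y-z)/(1-\lambda)$, whose barycenter evaluates to $2^{p}z+(x-z)+(y-z)=G_p(P)$ (for $p>0$; for $p<0$ the $z=0$ points contribute $0$ and the first term $2^{p}z$ matches $G_p$); in the region $v=x/z<1$, take $P=\lambda(v_{0}z_{1},z_{1}/v_{0},z_{1})+(1-\lambda)(0,\alpha,0)$ with $v_{0}=x/z$, $\lambda z_{1}=z$ and $\alpha=(y-z/v_{0})/(1-\lambda)\ge 0$ (since $y\ge z^{2}/x$ in $\Omega$), which also balances to $G_p(P)$. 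The main obstacle is the concavity/convexity verification in step two: for $F_p$ it reduces to a delicate one-variable calculation tied to~(\ref{TwoPoint}), and for $G_p$ one must additionally certify global concavity or convexity across the region interfaces.
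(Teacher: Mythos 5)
Your scaffolding matches the paper's: reduce by one-homogeneity to the cross-section $D=\Omega\cap\{x+y=2\}$, check the boundary values, prove concavity/convexity of the candidate, and exhibit through each interior point a convex combination of boundary points on which the candidate is linear (so any admissible affine function dominates the candidate there, while a supporting affine function shows the envelope is no larger). Your decompositions are correct and are in fact the paper's: the mirror pair $(a,b,z),(b,a,z)$ gives exactly the horizontal segments of $D$ on which $F_p$ is constant, and your segments joining the surface $\{z=\sqrt{xy}\}$ to the edge $\{x=z=0\}$, respectively filling the triangular cone $\{z\le\min(x,y)\}$ where $G_p$ is affine, correspond to the paper's segments through the corners $(s,z)=(\pm1,0)$ of $D$ and to the affine triangle $\{z<1-|s|\}$.

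The genuine gap is the step you explicitly defer as ``the main obstacle'': the sign analysis of the second derivatives. This is not routine bookkeeping --- it is where the entire case structure of Proposition~\ref{EnvelopeFormulae} comes from, i.e.\ why the thresholds sit exactly at $p=1$ and $p=2$ and why $F_p$ and $G_p$ exchange roles there. As written, your argument only shows that \emph{if} $F_p$ (resp.\ $G_p$) is concave or convex on the stated ranges, then it is the corresponding envelope. To close the gap for $F_p$, the paper substitutes $t=\sin(2s)$, $w=\tan(s)$, $x=w^2$ and reduces $\operatorname{sgn}(u'')$ to the sign of $v(x)=x^{2/p-1}+(2/p-1)x^{1/p-1}(1-x)-1$ on $[0,1]$, which is then settled from $v(1)=0$ and $v'(x)=x^{1/p-2}\left(\tfrac{2}{p}-1\right)\left(x^{1/p}-1-\tfrac{1}{p}(x-1)\right)$ via convexity or concavity of $x\mapsto x^{1/p}$; this is a genuinely delicate identity (the paper calls the intermediate algebra ``tedious''), not an immediate consequence of (\ref{TwoPoint}). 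For $G_p$ one must compute $h''$ for $h(t)=(t^{1/p}+t^{-1/p})^p-t-t^{-1}$, reduce to the sign of $g_p(x)=1+(2/p-1)x-(1+x)^{2-p}$ on $[0,1]$ (settled by the endpoint values and the convexity/concavity of $(1+x)^{2-p}$), and --- a point your outline only gestures at --- verify $h'(1)=0$, which makes $b_p$ be $C^1$ across the interface $\{z=1-|s|\}$ and is precisely what lets the piecewise concavity/convexity glue into a global statement. Until these computations are carried out, the proposition is not proved.
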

\noindent We delay the proof of Proposition \ref{EnvelopeFormulae} to Section \ref{EnvelopeComputations}, and immediately note that Theorems \ref{teor1} and \ref{teor2} follow quickly:

\begin{proof}[{\bf Proof of Theorems \ref{teor1} and \ref{teor2}:}]
To prove the inequalities, just apply Lemma \ref{GeneralConcavity} to the functions $\overline{H}_p$ and $\underline{H}_p$.
To check the equality cases, observe that in the proof of Lemma \ref{GeneralConcavity}, we have equality in Jensen provided $\{(f^p,\,g^p,\,(fg)^{p/2})\}$ lie in a set where $H$ is linear.

Since $F_p$ is linear when restricted to the hyperplanes $\{z = k(x+y)\} \cap \Omega$ (which are nontrivial when $k \in [0,\,1/2]$) we obtain the equality case in Theorem \ref{teor1}.

We note that $G_p$ is linear on the triangular cone $\{z \leq \min\{x,\,y\}\} \cap \Omega$, and on the hyperplanes $\{z = \gamma x\} \cap \Omega$ and $\{z = \gamma y\} \cap \Omega$ for each $\gamma \geq 1$.
The first condition gives $(fg)^{p/2} \leq \min\{f^p,\,g^p\}$, so $f = g$ on $\{fg > 0\}$ in the case $p > 0$ and on $\{fg < \infty\}$ in the case $p < 0$.
The second condition gives $(fg)^{p/2} = \gamma f^{p}$, and the third $(fg)^{p/2} = \gamma g^{p}$. When $p > 0$ the second condition gives that $g = \lambda f$ on $\{f > 0\}$ for some $\lambda \geq 1$,
and the third gives that $f = \lambda g$ on $\{g > 0\}$ for some $\lambda \geq 1$; when $p < 0$ the second condition gives $g = \lambda f$ on $\{f < \infty\}$ for some $\lambda \leq 1$,
and the third gives that $f = \lambda g$ on $\{g < \infty\}$ for some $\lambda \leq 1$.
\end{proof}

To conclude the section we address the optimality of Theorems \ref{teor1} and \ref{teor2} in the measure space $(X,\, \mathcal{A},\,\mu) = ([0,\,1],\, \mathcal{B},\, dx)$.
We define
$$\overline{B}_p(x,\,y,\,z) = \sup\left\{ \|f+g\|_p^p : (\|f\|_p^p,\,\|g\|_p^p,\,\|fg\|_{p/2}^{p/2}) = (x,\,y,\,z)\right\},$$
$$\underline{B}_p(x,\,y,\,z) = \inf\left\{ \|f+g\|_p^p : (\|f\|_p^p,\,\|g\|_p^p,\,\|fg\|_{p/2}^{p/2}) = (x,\,y,\,z)\right\}.$$
It is easy to see that $\overline{B}_p,\,\underline{B}_p$ are defined on a cone $\Omega_p \subset \Omega$, are locally bounded by the inequalities $(f + g)^p \leq 2^{p-1}(f^p + g^p)$ for $p \in (-\infty,\,0) \cup [1,\,\infty)$
and $(f+g)^p \leq f^p + g^p$ for $p \in (0,\,1]$, are one-homogeneous, and equal $\varphi_p$ on $\partial\Omega$ (by the
equality case of Cauchy-Schwarz). Furthermore, by Lemma \ref{GeneralConcavity} we have
$$\underline{H}_p \leq \underline{B}_p \leq \overline{B}_p \leq \overline{H}_p$$
on the common domain of definition.

\begin{lemma}\label{EnvelopeEquality}
If $\overline{B}_p\,(\underline{B}_p)$ is defined on all of $\Omega$ and is concave (convex), then
$$\overline{H}_p = \overline{B}_p \quad (\underline{B}_p = \underline{H}_p).$$
\end{lemma}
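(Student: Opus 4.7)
My plan is to prove the equality $\overline{H}_p = \overline{B}_p$ (assuming $\overline{B}_p$ is concave on $\Omega$) by establishing the two inequalities separately; the convex case $\underline{H}_p = \underline{B}_p$ will follow by a dual argument.

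The direction $\overline{B}_p \leq \overline{H}_p$ is already the content of the chain of inequalities recorded in the paragraph preceding the lemma. It is a consequence of Lemma~\ref{GeneralConcavity} applied to $H = \overline{H}_p$, once one notes that $\overline{H}_p$ is concave (as an infimum of affine functions), one-homogeneous (inherited from the one-homogeneity of $\varphi_p$ on the cone $\partial\Omega$), and $\geq \varphi_p$ on $\partial\Omega$ by construction; an inspection of the Jensen argument in the proof of Lemma~\ref{GeneralConcavity} shows that this last inequality (rather than equality) is sufficient to run through.

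For the reverse direction $\overline{H}_p \leq \overline{B}_p$, I will use a supporting hyperplane argument. Fix an interior point $P = (x_0, y_0, z_0) \in \Omega$. Concavity of $\overline{B}_p$ produces an affine function $L$ on $\mathbb{R}^3$ with $L(P) = \overline{B}_p(P)$ and $L \geq \overline{B}_p$ on all of $\Omega$. Since $\overline{B}_p = \varphi_p$ on $\partial\Omega$ by the equality case of Cauchy-Schwarz (already noted before the lemma), this gives $L \geq \varphi_p$ on $\partial\Omega$, so $L$ is an admissible competitor in the infimum defining $\overline{H}_p$. Therefore
$$\overline{H}_p(P) \leq L(P) = \overline{B}_p(P),$$
which together with the first direction yields $\overline{H}_p = \overline{B}_p$ at every interior point. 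Equality on $\partial\Omega$ then follows by continuity, since both $\overline{H}_p$ and $\overline{B}_p$ agree with $\varphi_p$ there.

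The main obstacle is the regularity question underlying the supporting hyperplane argument: the standard theorem yields $L \geq \overline{B}_p$ only on the relative interior of $\Omega$, and extending the inequality up to $\partial\Omega$ requires $\overline{B}_p$ to be upper semicontinuous at the boundary. This is easy to verify here because $\overline{B}_p$ is one-homogeneous and coincides on $\partial\Omega$ with the continuous function $\varphi_p$, but it is the point that deserves explicit attention. The convex case is proved dually: at an interior point, convexity of $\underline{B}_p$ produces a supporting affine $L$ from below with $L(P) = \underline{B}_p(P)$ and $L \leq \underline{B}_p$ on $\Omega$; then $L \leq \varphi_p$ on $\partial\Omega$, so $L$ contributes to the supremum defining $\underline{H}_p$, giving $\underline{H}_p \geq \underline{B}_p$, while the reverse inequality is again part of the chain noted before the lemma.
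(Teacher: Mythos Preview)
Your approach is essentially the paper's: the paper simply invokes the fact that $\overline{H}_p$ is the smallest concave function in $C(\Omega)$ with boundary values $\varphi_p$, while you spell out the underlying supporting-hyperplane argument behind that fact.

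One small slip: to push the inequality $L \geq \overline{B}_p$ from the interior to $\partial\Omega$ you need \emph{lower} semicontinuity of $\overline{B}_p$ at boundary points (so that $L(Q)=\lim L(P_n)\geq\liminf \overline{B}_p(P_n)\geq \overline{B}_p(Q)=\varphi_p(Q)$), not upper semicontinuity as you wrote; and one-homogeneity together with continuity of $\varphi_p$ on $\partial\Omega$ does not by itself supply this (a concave function can drop at the boundary). The paper handles the boundary regularity differently: it observes that $\overline{B}_p$ is squeezed between the continuous envelopes $\underline{H}_p$ and $\overline{H}_p$, both of which equal $\varphi_p$ on $\partial\Omega$, forcing $\overline{B}_p\in C(\Omega)$, after which the supporting-hyperplane step goes through cleanly.
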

\begin{proof}
Local boundedness and concavity of $\overline{B}_p$ implies continuity in the interior of $\Omega$, and since $\overline{B}_p$ is
trapped between envelopes that attain the data continuously, we have $\overline{B}_p \in C(\Omega)$. Since $\overline{H}_p$ is the 
smallest such concave function, we conclude that $\overline{B}_p \geq \overline{H}_p$. The argument is similar for $\underline{B}_p$.
\end{proof}

Thus, it just remains to show that when $(X,\,\mathcal{A},\,\mu) = ([0,\,1],\, \mathcal{B},\,dx)$, the domain of definition for $\overline{B}_p$ and $\underline{B}_p$ is all of $\Omega$,
and that $\overline{B}_p$ is concave and $\underline{B}_p$ is convex.

\begin{lemma}\label{EnvelopeLemma}
For $(X,\,\mathcal{A},\,\mu) = ([0,\,1],\, \mathcal{B},\,dx)$ we have $\Omega_p = \Omega$ for all $p \neq 0$, that $\overline{B}_p$ is concave in $\Omega$,
and that $\underline{B}_p$ is convex in $\Omega$.
\end{lemma}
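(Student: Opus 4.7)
The plan is to verify each of the three assertions separately: first $\Omega_p = \Omega$ by an explicit construction of piecewise constant realizing functions, and then concavity of $\overline{B}_p$ (and symmetrically convexity of $\underline{B}_p$) by a rescaling-and-concatenation argument that exploits the affine self-similarity of $([0,1], \mathcal{B}, dx)$.

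For the first, given $(x, y, z) \in \Omega$ with $x > 0$, I realize the triple by a two-piece function on $[0, 1/2]$ and $[1/2, 1]$. On $[0, 1/2]$ set $f \equiv (2x)^{1/p}$ and $g \equiv (2z^2/x)^{1/p}$, using the convention $0^{1/p} = +\infty$ when $p < 0$ and $z = 0$. A direct computation shows this piece contributes the partial triple $(x, z^2/x, z)$ to $(\|f\|_p^p, \|g\|_p^p, \|fg\|_{p/2}^{p/2})$. On $[1/2, 1]$ set $g \equiv (2(y - z^2/x))^{1/p}$ together with $f \equiv 0$ (if $p > 0$) or $f \equiv +\infty$ (if $p < 0$); in either case $f^p = 0$ and $(fg)^{p/2} = 0$ by the stated conventions, so this piece contributes $(0, y - z^2/x, 0)$. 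Summing yields the target triple $(x, y, z)$, where $y - z^2/x \geq 0$ by the Cauchy--Schwarz bound $z^2 \leq xy$ built into $\Omega$. The edge case $x = 0$ (which forces $z = 0$) is handled trivially by taking $f \equiv 0$ or $f \equiv +\infty$ everywhere.

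For concavity of $\overline{B}_p$, fix $(x_i, y_i, z_i) \in \Omega$, $i = 1, 2$, and $\lambda \in (0, 1)$. Local boundedness of $\overline{B}_p$ (noted in the paragraphs preceding the lemma) lets me pick, for each $\epsilon > 0$, functions $(f_i, g_i)$ on $[0, 1]$ realizing $(x_i, y_i, z_i)$ with $\|f_i + g_i\|_p^p \geq \overline{B}_p(x_i, y_i, z_i) - \epsilon$. Define $F$ on $[0, 1]$ by $F(t) = f_1(t/\lambda)$ for $t \in [0, \lambda]$ and $F(t) = f_2((t - \lambda)/(1-\lambda))$ for $t \in [\lambda, 1]$, and define $G$ analogously from $g_1, g_2$. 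The substitutions $s = t/\lambda$ and $s = (t - \lambda)/(1-\lambda)$ give
\[ \bigl(\|F\|_p^p,\; \|G\|_p^p,\; \|FG\|_{p/2}^{p/2}\bigr) = \lambda(x_1, y_1, z_1) + (1-\lambda)(x_2, y_2, z_2), \]
together with $\|F + G\|_p^p = \lambda\|f_1 + g_1\|_p^p + (1-\lambda)\|f_2 + g_2\|_p^p$. The definition of $\overline{B}_p$ at the convex combination then yields an inequality whose $\epsilon \to 0$ limit is precisely concavity. The convex case for $\underline{B}_p$ is identical with infima in place of suprema.

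The conceptual content is light, and no single step stands out as a genuine technical obstacle. The only point requiring care is the bookkeeping for $p < 0$ with the $+\infty$ convention in the construction step---in particular the degenerate cases $z = 0$, $y = z^2/x$, or $y = 0$, where some of the candidate constants are infinite and must be assigned their conventional values so that both $f^p$ and $(fg)^{p/2}$ produce the correct contributions.
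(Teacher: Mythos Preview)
Your proof is correct and follows essentially the same approach as the paper: an explicit construction with step functions to show $\Omega_p=\Omega$, and a rescaling-and-concatenation argument for concavity/convexity. Two minor variations are worth noting. For $\Omega_p=\Omega$, the paper uses a sliding indicator $f_s=(2x)^{1/p}\chi_{[s,s+1/2]}$ against $g=(2y)^{1/p}\chi_{[1/2,1]}$ and appeals to the intermediate value theorem for the overlap term, whereas you write down the realizing pair explicitly; your version is cleaner and avoids the IVT entirely. For concavity, the paper only checks the midpoint inequality (so strictly speaking needs local boundedness to upgrade to full concavity), while you prove the general-$\lambda$ inequality directly by rescaling onto $[0,\lambda]$ and $[\lambda,1]$; this is a slight streamlining of the same idea.
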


\noindent The optimality of the inequalities in Theorems \ref{teor1} and \ref{teor2} follows:

\begin{proof}[{\bf Proof of Optimality Statements:}]
For either inequality, given 
$$(x,\,y,\,z) = (\|f\|_p^p,\, \|g\|_p^p,\, \|fg\|_{p/2}^{p/2}),$$ 
the functions $\overline{B}_p(x,\,y,\,z)$ and $\underline{B}_p(x,\,y,\,z)$ are by definition the best we can do. 
These are equal to the envelopes $\overline{H}_p,\,\underline{H}_p$ by Lemmas \ref{EnvelopeEquality} and \ref{EnvelopeLemma}.
\end{proof}

\begin{remark}
For given $(x,\,y,\,z) \in \Omega$ and $p \in \mathbb{R} \backslash \{0\}$, the supremum (infimum) in the definition of $\overline{B}_p \, (\underline{B}_p)$ is in fact attained. 

For equality in (\ref{pir1}) consider pairs of the form $(f,\,g) = (a,\,b)\chi_{[0,\,c]} + (b,\,a)\chi_{[c,\,1]}$ for $a,\,b,\,c$ chosen appropriately. 

For equality in (\ref{pir2}), consider pairs of the form 
$$
(f,\,g) = (a,\,a) \chi_{[0,\,1/2]} + (b,\,0)\chi_{[1/2,\,3/4]} + (0,\, c)\chi_{[3/4,\,1]}
$$
 for $a,\,b,\,c$ appropriately chosen when $z \leq \min\{x,\,y\}$,
and $(f,\,g) = (a,\,b)\chi_{[0,\,1/2]} + (c,\,d)\chi_{[1/2,\,1]}$ when $z > \min\{x,\,y\}$ for appropriate $a,\,b,\,c,\,d$, with one of $c,\,d$ equal to $0$. 

For equality in (\ref{pir3}), consider pairs of the form 
$$
(f,\,g) = (a,\,a) \chi_{[0,\,1/2]} + (b,\,\infty)\chi_{[1/2,\,3/4]} + (\infty,\, c)\chi_{[3/4,\,1]}
$$
 for $a,\,b,\,c$ appropriately chosen when $z \leq \min\{x,\,y\}$,
and $(f,\,g) = (a,\,b)\chi_{[0,\,1/2]} + (c,\,d)\chi_{[1/2,\,1]}$ when $z > \min\{x,\,y\}$ for appropriate $a,\,b,\,c,\,d$, with one of $c,\,d$ equal to $\infty$.
\end{remark}

\begin{proof}[{\bf Proof of Lemma \ref{EnvelopeLemma}:}]
For the first part, if $p > 0$ take $f_s = (2x)^{1/p}\chi_{[s,\,s+1/2]}$ for $s \in [0,\,1/2]$ and let $g = (2y)^{1/p}\chi_{[1/2,\,1]}$. Then $\|f_s\|_p^p = x$ and $\|g\|_p^p = y$. Furthermore, 
we have $h(s) := \|f_sg\|_{p/2}^{p/2}$ is continuous, increasing, and $h(0) = 0,\, h(1/2) = \sqrt{xy}$. When $p < 0$, use the same example but set $f_s,\,g = \infty$
where they were previously zero.

For the second part, let $(x_i,\,y_i,\,z_i) \in \Omega$ with $i = 1,\,2$, and for $\epsilon > 0$ choose $f_i,\,g_i$ such that
$(x_i,\,y_i,\,z_i) = (\|f_i\|_p^p,\, \|g_i\|_p^p,\, \|f_ig_i\|_{p/2}^{p/2})$ and
$$\|f_i + g_i\|_p^p \geq \overline{B}_p(x_i,\,y_i,\,z_i) - \epsilon, \quad i =  1,\,2.$$
Extend $f_i,\,g_i$ to be zero outside of $[0,\,1]$, and define the rescalings
$$\tilde{f_1}(s) = 2^{1/p} f_1(2s),\, \tilde{g_1}(s) = 2^{1/p}g_1(2s),\, \tilde{f_2}(s) = 2^{1/p}f_2(2s-1),\, \tilde{g_2}(s) = 2^{1/p}g_2(2s-1),$$
so that $\tilde{f}_i,\,\tilde{g}_i$ are supported in $[0,\,1/2]$ for $i = 1$ and in $[1/2,\,1]$ for $i = 2$.
We then have
\begin{align*}
\frac{\overline{B}_p(x_1,\,y_1,\,z_1) + \overline{B}_p(x_2,\,y_2,\,z_2)}{2} - \epsilon &\leq \frac{1}{2}\left(\|\tilde{f}_1 + \tilde{g}_1\|_{L^p([0,\,1/2])}^p + \|\tilde{f}_2 + \tilde{g}_2\|_{L^p([1/2,\,1])}^p\right) \\
&= \frac{1}{2}\|\tilde{f}_1 + \tilde{g}_1 + \tilde{f}_2 + \tilde{g}_2\|_p^p \\
&= \left\|\frac{\tilde{f}_1 + \tilde{f}_2}{2^{1/p}} + \frac{\tilde{g}_1 + \tilde{g}_2}{2^{1/p}}\right\|_p^p \\
&\leq \overline{B}_p\left(\frac{1}{2}(x_1 + x_2,\, y_1 + y_2,\, z_1 + z_2)\right).
\end{align*}
For the last inequality, we used that for $f_0 := 2^{-1/p}(\tilde{f}_1 + \tilde{f}_2),\, g_0 := 2^{-1/p}(\tilde{g}_1 + \tilde{g}_2)$ we have
$$\|f_0\|_p^p = \frac{1}{2}(x_1+x_2),\, \|g_0\|_p^p = \frac{1}{2} (y_1 + y_2), \, \|f_0g_0\|_{p/2}^{p/2} = \frac{1}{2}(z_1 + z_2).$$
Taking $\epsilon \rightarrow 0$, we conclude that $\overline{B}_p$ is concave. The convex direction is similar.
\end{proof}

\begin{remark}
Lemma \ref{EnvelopeLemma} holds for any measure space with translation and scaling properties similar to $([0,\,1],\, \mathcal{B},\,dx)$, e.g. $(B_1 \subset \mathbb{R}^n,\, \mathcal{B},\, dx)$.
\end{remark}

\begin{remark}
The fact that $\overline{B}_p$ is concave also follows from Theorem $1$ in \cite{Paata}. Since the argument is simple, we decided to include it for the reader's convenience.
\end{remark}

%%%%%%%%%%%%%%%%%%%%%%%%%%%%%%%%%%%%%%%%%%%%%%%%%%%%%%%%%%%%%%%%%%%%%%%%%%%%%%%%%%%%%%%%%%%%%%%%%%%%%%%%%%%%
\section{Envelopes}\label{EnvelopeComputations}

In this section we prove Proposition \ref{EnvelopeFormulae}. We begin with some simple observations.

First, to check concavity (convexity) in $\Omega$ and continuity up to $\partial \Omega$ of $\overline{H}_p \, (\underline{H}_p)$, by one-homogeneity it suffices to check these properties
on the half-ellipse
$$D := \Omega \cap \{x+y = 2\}.$$ 
More generally, any one-homogeneous function $B$ in a convex cone in $\mathbb{R}^n$ (say contained in $\{x_n > 0\}$) is concave (convex) if it is concave (convex)
when restricted to a cross-section of the cone (say $\{x_n = 1\}$). Indeed, by one-homogeneity we have 
$$B\left(\frac{x+y}{2}\right) = \frac{x_n + y_n}{2}B\left(\lambda\frac{x}{x_n} + (1-\lambda) \frac{y}{y_n}\right)$$
where $\lambda = \frac{x_n}{x_n + y_n}$, and the statement follows by applying concavity / convexity of $B$ on the cross-section and then using one-homogeneity once more.

Second, to prove that $\overline{H}_p \, (\underline{H}_p)$ is the concave (convex) envelope of $\varphi_p$, it suffices to check that each point in the interior of $D$ lies on a segment that connects boundary points of $D$,
on which $\overline{H}_p \,(\underline{H}_p)$ is linear. Indeed, then any linear function larger (smaller) than $\varphi_p$ on $\partial \Omega$ will then be larger than $\overline{H}_p$
(smaller than $\underline{H}_p$) in the interior of $\Omega$.

\begin{proof}[{\bf Proof of Proposition \ref{EnvelopeFormulae}}]
We first examine $F_p$, and then $G_p$.

\vspace{3mm}

{\bf The Function $F_p$.} On $D$ we can write $F_p(1+s,\,1-s,\,t) = u(t),$ where
$$u(t) := \left[ (1+\sqrt{1-t^{2}})^{1/p}+(1-\sqrt{1-t^{2}})^{1/p} \right]^{p}, \quad t\in [0,1].$$ 
It is clear that $F_p$ is continuous up to $\partial D$ for each $p \in \mathbb{R} \backslash \{0\}$, and $u(0) = \varphi_p$ (that is, $2$ if $p > 0$ and $0$ if $p < 0$) on the bottom of $D$ and
$F_p(1-s,\,1+s,\,\sqrt{1-s^2}) = ((1+s)^{1/p} + (1-s)^{1/p})^p = \varphi_p$
on the top of $D$. Since $F_p$ is constant along the horizontal segments in $D$, it suffices to check that $u$ is concave when $p \in (0,\,1] \cap [2,\,\infty)$, and convex otherwise. 
To that end, we let $t =\sin(x)$, with $x \in [0,\pi/2]$. Then 
\begin{align*}
u(\sin(x)) = \left[ (1+\cos(x))^{1/p}+(1-\cos(x))^{1/p} \right]^{p}.
\end{align*}
Let us rewrite the last equality as  
\begin{align*}
\frac{1}{2} u(\sin(2s)) =  \left[\sin^{2/p}(s)+\cos^{2/p}(s) \right]^{p},
\end{align*}
where $s = x/2 \in [0,\,\pi/4]$. Differentiating both sides of the equality in $s$, we obtain
\begin{align*}
&u'(\sin(2s)) \cos(2s) \\
&= p \left[ \sin ^{2/p}(s)+\cos^{2/p}(s) \right]^{p -1} \frac{2}{p}\left(\sin^{2/p -1}(s) \cos(s) - \cos^{2/p-1}(s) \sin(s) \right)\\
&= p\left[ \sin ^{2/p}(s)+\cos^{2/p}(s) \right]^{p -1} \frac{2 \cos^{2/p}(s)}{p}\left(\tan^{2/p -1}(s) - \tan(s) \right).
\end{align*}
Taking the derivative a second time we obtain
\begin{align*}
&2u''(\sin(2s))\cos^{2}(2s)  - 2u'(\sin(2s))\sin(2s) = \\
&p(p-1)\left[ \sin ^{2/p}(s)+\cos^{2/p}(s) \right]^{p -2} \left[\frac{2 \cos^{2/p}(s)}{p}\left(\tan^{2/p -1}(s) - \tan(s) \right) \right]^{2}+\\
&p\left[ \sin ^{2/p}(s)+\cos^{2/p}(s) \right]^{p -1} \times \left( - \frac{4 \cos^{2/p}(s) \tan(s)}{p^{2}}\left(\tan^{2/p -1}(s) - \tan(s) \right) +\right.\\
&\left.  \frac{2 \cos^{2/p}(s)}{p}\left( \left( \frac{2}{p}-1\right)\tan^{2/p -2}(s) - 1\right)\left(1+\tan^{2}(s)\right) \right).
\end{align*}

Therefore 
\begin{align*}
&2u''(\sin(2s)) \cos^{2}(2s) = \left[ \sin ^{2/p}(s)+\cos^{2/p}(s) \right]^{p -2} \times  \frac{4}{p} \times \cos^{4/p}(s)\times  \\
&\left[ (p-1) \left[\left(\tan^{2/p -1}(s) - \tan(s) \right) \right]^{2}+\left[ 1+\tan^{2/p}(s) \right] \times \right. \\
&\left( - \tan(s)\left(\tan^{2/p -1}(s) - \tan(s) \right) + \left( \left(1- \frac{p}{2}\right)\tan^{2/p -2}(s) - \frac{p}{2}\right)\left(1+\tan^{2}(s)\right) \right)+\\
&\left.p\tan(2s) \left[ 1+\tan^{2/p}(s) \right] \left(\tan^{2/p -1}(s) - \tan(s) \right)\right].
\end{align*}

Since $\tan(2s) = \frac{2\tan(s)}{1-\tan^{2}(s)}$, after denoting   $\tan(s) = w \in [0,1]$ we obtain

\begin{align*}
&\frac{2u''(\sin(2s)) \cos^{2}(2s)}{\left[ \sin ^{2/p}(s)+\cos^{2/p}(s) \right]^{p -2}  \cos^{4/p}(s)} \\
&= \frac{4(p-1)}{p} \left(w^{2/p -1} - w \right)^{2} \\
&+\frac{4( 1+w^{2/p})}{p} \times \left( - w^{2/p} + w^{2} + \left( \left(1- \frac{p}{2}\right)w^{2/p -2} - \frac{p}{2}\right)\left(1+w^{2}\right) \right)\\
&+\frac{8w}{1-w^{2}} (1+w^{2/p}) (w^{2/p -1} - w) \\
&= \frac{2(1+w^{2})^{2}}{1-w^{2}} \left( w^{\frac{4}{p} -2}+\left(\frac{2}{p} -1\right)w^{\frac{2}{p} -2}(1-w^{2})-1\right).
\end{align*}
(The last equality is a tedious computation, but can be checked by hand). Since $\frac{2(1+w^{2})^{2}}{1-w^{2}}>0$ we see after denoting $x:=w^{2} \in [0,1]$ that $\sign(u'') = \sign(v(x))$, where 
\begin{align*}
v(x) = x^{\frac{2}{p}-1} + \left(\frac{2}{p}-1\right)x^{\frac{1}{p}-1} (1-x)-1, \quad x \in [0,1].
\end{align*}
Let us study the sign of $v(x)$. Without loss of generality assume that $p\neq 1, 2$, otherwise the claims about concavity/convexity of $u$ are trivial.  First notice that $v(1)=0$, and 
$$v'(x) = x^{\frac{1}{p}-2}\left(\frac{2}{p}-1\right)\left(x^{\frac{1}{p}} - \left(1+\frac{1}{p}(x-1) \right) \right).$$
Therefore, if $p \in (2,\infty)$  it follows from concavity of $x \mapsto x^{1/p}$ that $v'\geq 0$, and hence $v \leq 0$, i.e., $u$ is concave. Similarly, if $ p \in (1,2)$, then $v \geq 0$, i.e., $u$ is convex. Next, if $p \in (0,1)$ then $x \mapsto  x^{1/p}$ is convex, and hence $v'\geq 0$, i.e., $u$ is concave. Finally, if $p \in (-\infty,0)$ then $x \mapsto x^{1/p}$ is convex, and therefore $v' \leq 0$, i.e., $u$ is convex. 

\vspace{3mm}

%%%%%

{\bf The Function $G_p$.} Let $b_p(s,\,z) = G_p(1+s,\,1-s,\,z)$, with $(s,\,z)$ in the upper half-disc. For $p > 0$ we can write $b_p$ explicitly as
$$b_p(s,\,z) =  2 + \begin{cases} w(1-|s|,\,z), \quad z \geq 1-|s| \\ (2^p-2)z, \quad z < 1-|s|, \end{cases}$$
where $w$ is the one-homogeneous function given by
$$w(t,\,z) := \left(t^{1/p} + (z^2/t)^{1/p}\right)^p - (t + (z^2/t))$$
with $(t,\,z) \in (0,\,1)^2$. It is easy to check that $b_p$ continuously takes the boundary values $b_p(s,\,0) = 2 = \varphi_p$ and $b_p(s,\,\sqrt{1-s^2}) = ((1+s)^{1/p} + (1-s)^{1/p})^p = \varphi_p$. Let 
$$h(t) := w(t,\,1) = \left(t^{1/p} + t^{-1/p}\right)^p - (t + t^{-1}), \quad t \in (0,\,1).$$
By the one-homogeneity of $w$ and the fact that $b_p$ is linear on the triangle $\{z < 1-|s|\}$
with vertical gradient, if we show that $h'(1) = 0$ and that $h$ is concave / convex on $[0,\,1]$, then $b_p$ is $C^1$ away from $(s,\,z) = (\pm 1,\,0)$ and concave / convex.
Furthermore, $b_p$ is linear when restricted to the segments through $(s,\,z) = (\pm 1,\,0)$ that lie outside of the triangle $\{z \leq 1-|s|\}$, so $G_p$ is the concave / convex envelope 
provided the above conditions on $h$ are confirmed. To that end we compute the first two derivatives of $h$. The first derivative is
$$h'(t) = (t^{1/p} + t^{-1/p})^{p-1}(t^{1/p-1} - t^{-1/p-1}) - (1-t^{-2}).$$
This confirms that $h'(1) = 0$. The second derivative is
\begin{align*}
h''(t) &= \frac{p-1}{p}(t^{1/p} + t^{-1/p})^{p-2}(t^{1/p-1} - t^{-1/p-1})^2 \\
&+ \frac{1}{p}(t^{1/p} + t^{-1/p})^{p-1}((1-p)t^{1/p-2} + (1+p)t^{-1/p-2}) - 2t^{-3} \\
&= \frac{1}{p}(t^{1/p} + t^{-1/p})^{p-2}[(p-1)(t^{1/p-1} - t^{-1/p-1})^2  \\
&+ (t^{1/p} + t^{-1/p})((1-p)t^{1/p-2} + (1+p)t^{-1/p-2})] - 2t^{-3} \\
&= \frac{2}{p}(t^{1/p} + t^{-1/p})^{p-2}[pt^{-2/p-2} + (2-p)t^{-2}] - 2t^{-3} \\
&= 2t^{-3}[(t^{1/p} + t^{-1/p})^{p-2}(t^{1-2/p} + (2/p-1) t) - 1] \\
&= 2t^{-3}[(1 + t^{2/p})^{p-2}(1 + (2/p-1)t^{2/p}) - 1].
\end{align*}
Let $x := t^{2/p} \in [0,\,1]$. It suffices to show that
$$g_p(x) := (1 + (2/p - 1)x) - (1+x)^{2-p}$$
satisfies $g_p \leq 0$ on $[0,\,1]$ for $p \in (1,\,2)$ and $g_p \geq 0$ on $[0,\,1]$ for $p \in (0,\,1] \cup [2,\,\infty)$.
Note that $g_p(0) = 0$. The desired inequality for $g_p(1)$ is equivalent
to the fact that the linear function $p$ crosses the convex function $2^{p-1}$ at $p = 1$ and $p = 2$. Finally, we observe that the first term
in $g_p$ is linear, and the second term is convex for $p \in (1,\,2)$ and concave for $p \in (0,\,1) \cup (2,\,\infty)$. The desired inequality for
$g_p(x)$ with $x \in (0,\,1)$ follows immediately from this observation and the inequalities at the endpoints $x = 0$ and $x = 1$.

When $p < 0$ we can write $b_p$ explicitly as
$$b_p(s,\,z) =  \begin{cases} \tilde{w}(1-|s|,\,z), \quad z \geq 1-|s| \\ 2^p\,z, \quad z < 1-|s|, \end{cases}$$
where $\tilde{w}$ is the one-homogeneous function given by
$$\tilde{w}(t,\,z) := \left(t^{1/p} + (z^2/t)^{1/p}\right)^p$$
with $(t,\,z) \in (0,\,1)^2$. The same considerations as above reduce the problem to showing that
$$\tilde{h}(t) := \tilde{w}(t,\,1) = \left(t^{1/p} + t^{-1/p}\right)^p$$
satisfies $\tilde{h}'(1) = 0$ and $\tilde{h}$ is concave on $[0,\,1]$. We have
$$\tilde{h}' = (t^{1/p} + t^{-1/p})^{p-1}(t^{1/p-1} - t^{-1/p-1}) \Rightarrow \tilde{h}'(1) = 0,$$ 
$$\tilde{h}'' = 2t^{-2}(t^{1/p} + t^{-1/p})^{p-2}[t^{-2/p} + (2/p-1)],$$
and the conclusion follows quickly using $p < 0$.
\end{proof}

\begin{remark}\label{TriangleSupport}
It follows from the concavity / convexity properties of $G_p$ that
$$G_p(x,\,y,\,z) \leq x + y + (2^p - 2)z$$
when $p \in [1,2]$, and the inequality reverses for $p \in (0,\,1] \cup [2,\,\infty)$. Indeed, $G_p$ agrees with the linear function on the right hand side on an open set. 
We conclude from Theorem \ref{teor2} that for any nonnegative numbers $a,b$, and any $p \in [1,2]$, we have
\begin{align*}
(a+b)^{p} \leq a^{p}+b^{p}+(2^{p}-2)(ab)^{p/2},
\end{align*}
and the inequality reverses if $p \in (0,1] \cup [2,\infty)$.
\end{remark}

%%%%%%%%%%%%%%%%%%%%%%%%%%%%%%%%%%%%%%%%%%%%%%%%%%%%%%%%%%%%%%%%%%%%%%%%%%%%%%%%%
\section{Proof of Corollary~\ref{cor2}}\label{Corollary2}
In this final section we prove Corollary \ref{cor2}.

\begin{proof}[{\bf Proof of Corollary \ref{cor2}:}]
Recall from Remark \ref{TriangleSupport} that for any nonnegative numbers $a,b$, and any $p \in [1,2]$, we have
\begin{align*}
(a+b)^{p} \leq a^{p}+b^{p}+(2^{p}-2)(ab)^{p/2},
\end{align*}
and the inequality reverses for $p \in (0,\,1] \cup [2,\,\infty)$. Since for $p \in [0,2]$ we have $(a+b)^{p/2}\leq a^{p/2}+b^{p/2}$, and the
reverse inequality if $p\geq 2$, it follows by induction that for any nonnegative numbers $a_{j}\geq 0$ we have 
\begin{align}\label{pin}
(\sum a_{j})^{p} \leq \sum_{j} a_{j}^{p} + (2^{p}-2)\sum_{i<j} (a_{i}a_{j})^{p/2}
\end{align}
holds true for $p\in [1,2]$, and the reverse inequality if $p \in (0,1]\cup [2, \infty)$. 
Finally it remains to put $a_{j}=f_{j}(x)$ and integrate the inequality. 
\end{proof}

\begin{remark}
When $p < 0$, inequality (\ref{pin}) does not hold with three or more $a_j$. Take e.g.  $a_j = 1$ for $j \leq 3$.
\end{remark}

\section{Concluding Remarks on Envelopes}

An important challenge in this work was to compute the envelopes (\ref{env11}) and (\ref{env22}). In this section we briefly explain how we found them.

We recall from Section \ref{Reductions} that for the measure space $([0,1], \mathcal{B}, dx)$ we have $\overline{B}_p = \overline{H}_p$ is defined on $\Omega$, one-homogeneous, and equals $\varphi_p$
on $\partial \Omega$; that is, $\overline{H}_p(x,\,y,\,\sqrt{xy}) = (x^{1/p} + y^{1/p})^p$. We also recall from the discussion at the beginning of Section \ref{EnvelopeComputations} that by one-homogeneity, to compute $\overline{H}_p$ it is enough to restrict our attention to the 
cross-section $D = \Omega \cap \{x+y=2\}$. Writing $D = \{(1+s,\,1-s,\,z)\}$ with $(s,\,z)$ in the upper half-disc, this reduces the problem understanding how the upper boundary of the convex envelope of the space curve 
$$\gamma(s) = (s, \sqrt{1-s^{2}}, ((1-s)^{1/p}+(1+s)^{1/p})^{p}), \quad s \in [-1,1] $$
looks. One can show that the torsion $\tau_{\gamma}$ of the space curve $\gamma$ changes sign only once from $-$ to $+$, at $s=0$, when $p \in (0,1) \cup (2, \infty)$, and from $+$ to $-$  when $p \in (-\infty,0) \cup (1, 2)$. Consider the case $p \in (0,1) \cup (2, \infty)$. Then it follows from Lemma~29 of Section~3.2 in \cite{Paata2} that locally, say for some $\delta \in (0,1]$,  there exists a function $a(s) : [0, \delta] \to [-1,0]$ such that $a(0)=0$, $a(s)$ is strictly decreasing, and the function $B(u,w)$ defined parametrically by
 \begin{align*}
 &B(\lambda (a(s), \sqrt{1-a(s)^{2}}) + (1-\lambda) (s, \sqrt{1-s^{2}}))=\\
 & \lambda((1-a(s)^{1/p}+(1+a(s))^{1/p})^{p} + (1-\lambda)((1-s)^{1/p}+(1+s)^{1/p})^{p}
 \end{align*} 
for $ \lambda \in [0,1],  s \in [0, \delta]$ is concave. In other words $B$ has the prescribed boundary condition, i.e., 
$B(s, \sqrt{1-s^{2}}) = ((1-s)^{1/p}+(1+s)^{1/p})^{p}$, it is linear along the line segments 
$\ell(s):=[(a(s), \sqrt{1-a(s)^{2}}), (s, \sqrt{1-s^{2}})],$
 and $B$ is concave. It follows that ``locally'' $B$ is a concave envelope. Because of the symmetry in $x$ and $y$ of the boundary data $\varphi_p$, one can show that the line segments $\ell(s)$ must be horizontal, i.e., 
 $a(s) = -s$, and in fact $\delta =1$. This means that $B$ is a global concave envelope
 \begin{align*}
 B(u,w)=((1-\sqrt{1-w^{2}})^{1/p}+(1+\sqrt{1-w^{2}})^{1/p})^{p}
 \end{align*}
 for all $|u|\leq 1$ and $0\leq w \leq \sqrt{1-u^{2}}$. Now it remains to change variables back to recover the envelope (\ref{env11}). 
 
 The case $p \in (-\infty,0) \cup (1, 2)$ is different because $\tau_{\gamma}$ changes sign from $+$ to $-$, and in this case an ``angle'' arises with vertex sitting around the point $s=0$ (see Section~3 in \cite{Paata2}).

%%%%%%%%%%%%%%%%%%%%%%%%%%%%%%%%%%%%%%%%%%%%%%%%%%%%%%%%%%%%%%%%%%%%%%%%%%%%%%%%%%%%%%%%%%%%%%%%%%%%%%%%%%%%

%%%%%%%%%%%%%%%%%%%%%%%%%%%%%%%%%%%%%%%%%%%%%%%%%%%%%%%%%%%%%%%%%%%%%%%%%%%%%%%%%%%%%%%%%%%%%%%%%%%%%%%%%%%%

\end{document}